\newcommand{\be}{\begin{equation}}
\newcommand{\ee}{\end{equation}} 
\newcommand{\bea}{\begin{eqnarray}}
\newcommand{\eea}{\end{eqnarray}} 
\theoremstyle{plain}
\newtheorem{theorem}{Theorem}
\newtheorem{corollary}{Corollary}
\theoremstyle{definition}
\theoremstyle{remark}
\def\1#1{^{(#1)}}
\def\be{\begin{equation}}
\def\ee{\end{equation}}
\def\bea{\begin{eqnarray}}
\def\eea{\end{eqnarray}}
\begin{document}
\title{\bf Gaussian Polynomials and \\Restricted Partition Functions with
Constraints}
\author{Leonid G. Fel\\
Department of Civil Engineering, Technion -- Israel Institute of Technology,\\ 
Haifa 32000, Israel\\
\small\tt lfel@technion.ac.il}
\date{}
\maketitle
\begin{abstract} 
We derive an explicit formula for a restricted partition function $P_n^m(s)$ 
with constraints making use of known expression for a restricted partition 
function $W_m(s)$ without constraints. \\ \\
{\bf Keywords:} restricted partition function, triangular Toeplitz matrix \\
{\bf 2010 Mathematics Subject Classification:} 05A17, 11P82.
\end{abstract}
Consider the linear Diophantine equation with constraints
\bea
a)\quad\sum_{r=1}^m r\; x_r=s,\qquad b)\quad\sum_{r=1}^m x_r\le n.\label{g1}
\eea  
Denote by $P_n^m(s)$ a number of the non-negative integer solutions $X=\{x_r\}$ 
of the linear system (\ref{g1}a,b). The following theorem dates back to 
Sylvester \cite{sy82} and Schur \cite{sh68},
\begin{theorem}\label{the1}
Let $P_n^m(s)$ be generated by the Gaussian polynomial $G(n,m;t)$ of the finite 
order $mn$
\bea
G(n,m;t)=\frac{\prod_{i=1}^{n+m}(1-t^i)}{\prod_{u=1}^{n}(1-t^u)\cdot\prod_{v=1}
^{m}(1-t^v)}=\sum_{s\ge 0}^{n m} P_n^m(s)\cdot t^s\;.\label{g2}
\eea
Then the partition function $P_n^m(s)$ has the following properties:
\bea
&&P_n^m(s)=0\quad\mbox{if}\quad s>n m,\qquad P_n^m(0)=P_n^m(n m)=1,\nonumber\\
&&P_n^m(s)=P_m^n(s)=P_n^m(n m-s),\qquad P_n^m\left(\frac{mn}{2}-s\right)=P_n^m
\left(\frac{mn}{2}+s\right),\label{g3}\\
&&P_n^m(s)-P_n^m(s-1)\ge 0\quad\mbox{for}\quad 0<s\le\frac{mn}{2}.\nonumber
\eea
\end{theorem}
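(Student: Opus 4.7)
The plan is to deduce each assertion directly from the generating-function identity (\ref{g2}). Summing exponents in the product formula shows that the degree of $G(n,m;t)$ is $\binom{n+m+1}{2}-\binom{n+1}{2}-\binom{m+1}{2}=nm$, so $P_n^m(s)=0$ for $s>nm$. The value $P_n^m(0)=1$ is immediate from (\ref{g1}a,b), since only the all-zero vector satisfies $\sum r\,x_r=0$. The symmetry $P_n^m(s)=P_m^n(s)$ is evident from (\ref{g2}), whose right-hand side is manifestly invariant under interchanging $n$ and $m$.

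For the palindromic identity $P_n^m(s)=P_n^m(nm-s)$ I would substitute $t\mapsto t^{-1}$ in (\ref{g2}) and rewrite each factor via $1-t^{-k}=-t^{-k}(1-t^k)$. Bookkeeping the accumulated power of $t$ in numerator and denominator yields the functional equation $G(n,m;t^{-1})=t^{-nm}G(n,m;t)$, which, read coefficient by coefficient, is precisely $P_n^m(s)=P_n^m(nm-s)$. The centered form $P_n^m(mn/2-s)=P_n^m(mn/2+s)$ is then just a change of summation variable, and $P_n^m(nm)=P_n^m(0)=1$ drops out as a by-product.

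The main obstacle is the unimodality statement $P_n^m(s)-P_n^m(s-1)\ge 0$ for $0<s\le mn/2$, which is the real content of the Sylvester--Schur theorem and does not follow from any obvious generating-function manipulation. I would follow the representation-theoretic route: interpret $P_n^m(s)$ as the dimension of the degree-$s$ piece of the graded vector space $V$ whose basis is indexed by partitions fitting inside an $n\times m$ rectangle, and place on $V$ an $\mathfrak{sl}_2$-action under which the raising operator $E$ maps the degree-$(s-1)$ piece injectively into the degree-$s$ piece whenever $s\le mn/2$. Unimodality then follows from $\dim\ker E\big|_{V_{s-1}}=0$. The technical core is constructing that module structure---equivalently, invoking hard Lefschetz on the cohomology of the Grassmannian $\mathrm{Gr}(n,n+m)$, whose Poincar\'e polynomial is $G(n,m;t)$. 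Once that input is granted the inequality is automatic, and the range $s>mn/2$ is handled by the palindrome established above.
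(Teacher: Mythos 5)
The paper itself offers no proof of Theorem \ref{the1}: it is quoted as a classical result and attributed to Sylvester \cite{sy82} and Schur \cite{sh68}, so there is no in-paper argument to measure you against. Judged on its own, your treatment of the elementary clauses is correct and complete. The degree count $\binom{n+m+1}{2}-\binom{n+1}{2}-\binom{m+1}{2}=nm$ is right (granting, as the statement itself does, that $G(n,m;t)$ is a polynomial; alternatively $s=\sum_r r x_r\le m\sum_r x_r\le mn$ gives the vanishing directly from (\ref{g1}a,b)). The $n\leftrightarrow m$ symmetry is indeed manifest, and the substitution $t\mapsto t^{-1}$ with $1-t^{-k}=-t^{-k}(1-t^k)$ does yield $G(n,m;t^{-1})=t^{-nm}G(n,m;t)$: the signs $(-1)^{n+m}$ cancel against $(-1)^n(-1)^m$ and the accumulated exponent is exactly $-nm$, so the palindrome, the centered symmetry, and $P_n^m(nm)=1$ all follow as you say.

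The one place where your proposal is a plan rather than a proof is the monotonicity $P_n^m(s)\ge P_n^m(s-1)$ for $s\le mn/2$. You correctly identify it as the real content of the theorem and you name a valid route --- an $\mathfrak{sl}_2$-action on the span of partitions in the $n\times m$ box with weight $2s-nm$ on the degree-$s$ piece, or equivalently hard Lefschetz on $\mathrm{Gr}(n,n+m)$, whose Poincar\'e polynomial in $t^2$ is $G(n,m;t^2)$ --- but you do not construct the raising operator or verify the commutation relations, which is exactly where all the work lies. As a citation of known results (Sylvester's original invariant-theoretic proof is essentially this $\mathfrak{sl}_2$ argument, and Proctor's linear-algebra version makes the operator explicit) this is legitimate and no worse than what the paper does; as a self-contained proof it leaves the decisive step unexecuted. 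If you intend the former, say so and give the reference; if the latter, you still owe the construction of $E$ and the injectivity argument below the middle degree.
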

A most comprehensive introduction to the Gaussian polynomials $G(n,m;t)$ and 
partition function $P_n^m(s)$ with constraints (\ref{g1}b) is given in 
\cite{and76}. But nowhere an explicit expression for $P_n^m(s)$ was derived. 
A situation is similar to the study of restricted partition function $W_m(s)$ 
without constraints (see its definition in (\ref{g4}) and following paragraphs) 
up to the last decade, when its explicit expression was finally found 
\cite{rf06}.

In this paper we derive such formula for $P_n^m(s)$ making use of strong 
relationship of the Gaussian polynomials with the Molien generating function for
restricted partition function $W_m(s)$ without constraints (\ref{g1}b).

Following \cite{fr02,rf06}, recall the basic facts from the partition theory and
consider the linear Diophantine equation $\sum_{r=1}^m d_r\;x_r=s$ without 
constraints (\ref{g1}b). Then the Molien function $M\left({\bf d}^m;t\right)$
reads,
\be
M\left({\bf d}^m;t\right)=\prod_{i=1}^m\frac{1}{1-t^{d_{i}}}=\sum_{s=0}^{\infty}
W\left(s,{\bf d}^m\right)\;t^s,\qquad {\bf d}^m=\{d_1,\ldots,d_m\}.\label{g4}
\ee
It generates a restricted partition function $W\left(s,{\bf d}^m\right)$ which 
gives a number of partitions of $s\ge 0$ into positive integers $\{d_1,\ldots,
d_m\}$, each not exceeding $s$, and vanishes, if such partition does not exist.
According to Proposition 4.4.1, \cite{st86} and Schur's theorem (see 
\cite{wi06}, Theorem 3.15.2), the function $W\left(s,{\bf d}^m\right)$ is a 
quasi-polynomial of degree $m-1$, 
\be
W\left(s,{\bf d}^m\right)=\sum_{r=0}^{m-1}K_r\left(s,{\bf d}^m\right)s^r,\qquad 
K_{m-1}\left(s,{\bf d}^m\right)=\frac{1}{(m-1)!\;\pi_m},\quad\pi_m=\prod_{i=1}
^md_i,\label{g5}
\ee
where coefficients $K_r\left(s,{\bf d}^m\right)$ are periodic functions with 
periods dividing $lcm(d_1,\ldots,d_m)$. The explicit expressions for $W\left(s,
{\bf d}^m\right)$ were derived in \cite{rf06} in a form of a finite sum over 
Bernoulli and Euler polynomials of higher order with periodic coefficients. 
Note that $W\left(0,{\bf d}^m\right)=1$.

In a special case, when ${\bf d}^m$ is a tuple of consecutive natural numbers 
$\{1,\ldots,m\}$, the expression for such partition function looks much more 
simple (see \cite{rf06}, formula (46)), and its straightforward calculations 
for $m=1,\ldots,12$ were presented in \cite{fr02}, section 6.1. For short, we 
denote it by $W_m(s)$. In particular, if $m$ is arbitrary large, we arrive at 
unrestricted partition function $W_{\infty}(s)$ known due to the Hardy-Ramanujan
asymptotic formula and Rademacher explicit expression \cite{and76}. By definition of 
a restricted partition function the following equality holds,
\bea
W_m(s)=W_{\infty}(s),\quad\mbox{if}\quad s\le m.\label{g6}
\eea
By comparison of two generating function in (\ref{g2}) and (\ref{g4}) we obtain,
\bea
G(n,m;t)M(m+n;t)=M(m;t)M(n;t),\quad M(m;t)=\prod_{i=1}^m\frac1{1-t^i}=\sum_{s=0}^{\infty}W_m(s)
\;t^s.\label{g7}
\eea
Substituting into (\ref{g7}) the polynomial representations (\ref{g2}) and 
(\ref{g4}) we arrive at
\bea
\sum_{s_1,s_2=0}^{\infty}P_n^m(s_1)W_{m+n}(s_2)\;t^{s_1+s_2}=\sum_{s_1,s_2=0}^
{\infty}W_n(s_1)W_m(s_2)\;t^{s_1+s_2}.\label{g8}
\eea
Equating in (\ref{g8}) the terms with different $s_1+s_2=g$ we obtain for every
$g$ a linear equation in $P_n^m(s)$,
\bea
\Delta=\sum_{s=0}^g\left[P_n^m(s)W_{m+n}(g-s)-W_n(s)W_m(g-s)\right]=0.\label{g9}
\eea
For small $s$ equation (\ref{g9}) may be resolved easily.
\begin{theorem}\label{the2}
Let $P_n^m(g)$ be generated by the Gaussian polynomial $G(n,m;t)$, then
\bea
P_n^m(g)&=&W_{\mu_1}(g),\qquad 0\le g\le\mu_1,\qquad\mu_1=\min(n,m),\label{g10}
\\
P_n^m(g)&=&W_{\mu_1}(g),\qquad\mu_1\le g\le\mu_2,\qquad\mu_2=\max(n,m),
\label{g11}\\
P_n^m(g)&=&W_n(g)+W_m(g)-W_{n+m}(g),\quad\mu_2\le g\le n+m.\label{g12}
\eea
\end{theorem}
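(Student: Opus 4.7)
The plan is to read off the coefficients of $G(n,m;t)$ directly from the factorisation
\be
G(n,m;t)=\frac{M(m;t)\,M(n;t)}{M(m+n;t)}=M(m;t)\prod_{i=n+1}^{n+m}(1-t^i),
\ee
which is an immediate rearrangement of (\ref{g7}). Invoking the symmetry $P_n^m(g)=P_m^n(g)$ from (\ref{g3}), I assume throughout that $m\le n$, so that $\mu_1=m$ and $\mu_2=n$; the reverse case follows by swapping $m$ and $n$ in what follows.

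Next I expand the finite product as a signed sum over subsets $S\subseteq\{n+1,\ldots,n+m\}$. Any $S$ with $|S|\ge 2$ satisfies $\sum_{i\in S}i\ge(n+1)+(n+2)=2n+3>n+m$, where the hypothesis $m\le n$ is crucial, so such subsets contribute nothing to $[t^g]$ for $g\le n+m$. Hence, modulo $t^{\,n+m+1}$, the product collapses to $1-\sum_{k=1}^{m}t^{\,n+k}$, and reading off the coefficient of $t^g$ in $G(n,m;t)$ gives
\be
P_n^m(g)=W_m(g)-\sum_{k=1}^{m}W_m(g-n-k)\qquad(0\le g\le n+m),
\ee
where $W_m(\cdot)$ vanishes on negative arguments. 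For $g\le n=\mu_2$ every summand is zero, so $P_n^m(g)=W_m(g)=W_{\mu_1}(g)$; this proves (\ref{g10}) and (\ref{g11}) in one stroke.

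In the remaining range $n<g\le n+m$ the identity reduces to $P_n^m(g)=W_m(g)-\sum_{\ell=0}^{g-n-1}W_m(\ell)$, so matching (\ref{g12}) requires
\be
\sum_{\ell=0}^{g-n-1}W_m(\ell)=W_{n+m}(g)-W_n(g).
\ee
This is the main obstacle, and I would settle it combinatorially rather than by further generating-function manipulation. The right-hand side counts partitions of $g$ with at least one part exceeding $n$; but two such parts would sum to $\ge 2n+2>n+m\ge g$ (again using $m\le n$), so any such partition contains exactly one part $p\in\{n+1,\ldots,g\}$. Deleting $p$ leaves a partition of $g-p\le m-1$ with parts $\le n$, counted by $W_n(g-p)$; and since $g-p<m\le n$, relation (\ref{g6}) forces $W_n(g-p)=W_m(g-p)$. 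Summing over $p$ and reindexing $\ell=g-p$ yields the displayed identity, and (\ref{g12}) follows.
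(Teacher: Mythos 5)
Your proof is correct, but it takes a genuinely different route from the paper's. The paper stays with the convolution identity obtained by equating coefficients in $G(n,m;t)M(m+n;t)=M(m;t)M(n;t)$, namely equation (\ref{g9}), and solves it range by range: for $g\le\mu_2$ the reduction is immediate via (\ref{g6}), while for $\mu_2\le g\le n+m$ the sum is split into a ``folded'' part $\Delta_1$ pairing the indices $s$ and $g-s$ and a middle part $\Delta_2$ shown to vanish term by term using the already-established cases. You instead divide rather than convolve: from (\ref{g7}) you write $G(n,m;t)=M(m;t)\prod_{i=n+1}^{n+m}(1-t^i)$ and observe that, for $m\le n$, every subset of $\{n+1,\dots,n+m\}$ of size at least two contributes only to degrees beyond $n+m$, so the product truncates to $1-\sum_{k=1}^{m}t^{n+k}$. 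This yields the single closed formula $P_n^m(g)=W_m(g)-\sum_{k=1}^{m}W_m(g-n-k)$ on the whole range $0\le g\le n+m$, from which (\ref{g10}) and (\ref{g11}) are immediate; (\ref{g12}) then reduces to the identity $\sum_{\ell=0}^{g-n-1}W_m(\ell)=W_{n+m}(g)-W_n(g)$, which you settle by the clean bijective observation that a partition of $g\le n+m$ with a part exceeding $n$ has exactly one such part. All the inequalities you invoke ($2n+3>n+m$, $2n+2>g$, $g-p<m\le n$) do hold under your normalization $m\le n$, and the appeal to (\ref{g6}) to trade $W_n(g-p)$ for $W_m(g-p)$ is legitimate, so there is no gap. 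Your argument is the more elementary and self-contained of the two, and the intermediate truncated formula is a small bonus not stated in the paper; what the paper's route buys is that the convolution system (\ref{g9}) and its triangular Toeplitz structure are exactly the machinery reused for the general solution in Theorem \ref{the3}.
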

\begin{proof}
Start with observation in (\ref{g2},\ref{g3}) that due to the invariance of 
$G(n,m;t)$ and $P_n^m(g)$ under interchange $n\leftrightarrow m$, it is enough 
to prove (\ref{g10},\ref{g11},\ref{g12}) in the case $n\le m$. 

If $\;\;0\le s\le g\le n$, then due to (\ref{g6}) we have $W_m(g-s)=W_{m+n}(g-
s)=W_{\infty}(g-s)$ since $0\le g-s\le n$. Substitute the above equalities into 
(\ref{g9}) and obtain $P_n^m(g)=W_n(g)$.

If $\;\;n\le g\le m$, then due to (\ref{g6}) we have $W_m(s)=W_{m+n}(s)=W_{
\infty}(s)$ for all $0\le s\le g$. Substitute the last equalities into 
(\ref{g9}) and obtain $P_n^m(g)=W_n(g)$.

If $\;\;m\le g\le m+n\;$ and $\;g=m+r$, $\;1\le r\le n$, let us represent the 
l.h.s. of (\ref{g9}) as follows, $\Delta=\Delta_1+\Delta_2$,
\bea
\Delta_1=\left(\sum_{s=0}^r+\sum_{s=m}^{m+r}\right)\left[P_n^m(s)W_{m+n}
(m+r-s)-W_n(s)W_m(m+r-s)\right]=\sum_{s=0}^rF_1(s),\quad\label{g13}\\
F_1(s)=P_n^m(s)W_{m+n}(m+r-s)-W_n(s)W_m(m+r-s)+P_n^m(m+r-s)W_{m+n}(s)-
\nonumber\\
W_n(m+r-s)W_m(s),\hspace{1cm}\nonumber\\
\Delta_2=\sum_{s=r+1}^{m-1}\left[P_n^m(s)W_{m+n}(m+r-s)-W_n(s)W_m(m+r-s)\right],
\qquad\mbox{i.e.,}\hspace{2.2cm}\nonumber\\
\Delta_2=\sum_{s=r+1}^kF_1(s),\quad\mbox{if}\quad g=2k+1;\qquad
\Delta_2=\sum_{s=r+1}^{k-1}F_1(s)+F_2(k),\quad\mbox{if}\quad g=2k,
\hspace{.6cm}\label{g14}\\
F_2(s)=P_n^m(s)W_{m+n}(s)-W_n(s)W_m(s).\hspace{7.7cm}\label{g15}
\eea
Prove that the term $\Delta_2$ vanishes and start with $F_2(k)$. Since $n\le m$
then $(m+1)/2\le k\le m$, that implies two equalities: first, by (\ref{g6}), $W_
{m+n}(k)=W_m(k)=W_{\infty}(k)$ and next, by (\ref{g11}), $P_n^m(k)=W_n(k)$ 
either $k\le n$ or $n\le k\le m$. Substitute these equalities into (\ref{g15}) 
and obtain $F_2(k)=0$.

Consider the term $\Delta_2$ with $g=2k$ in (\ref{g14}), where $1\le r<s<k\le 
m$ and $n\le m+r-s\le m$, and write four equalities:
\bea
&&W_{m+n}(m+r-s)=W_m(m+r-s),\qquad W_{m+n}(s)=W_m(s),\nonumber\\
&&P_n^m(m+r-s)=W_n(m+r-s),\qquad P_n^m(s)=W_n(s).\label{g16}
\eea
which implies $\Delta_2=0$. The term $\Delta_2$ with $g=2k+1$ in (\ref{g14})
vanishes also by the same reasons (\ref{g16}).

Thus, instead of (\ref{g9}), we arrive at equation $\sum_{s=0}^rF_1(s)=0$ where
$0\le s\le r\le n$. However, according to (\ref{g6}) we have $W_{m+n}(s)=W_m(s)
=W_n(s)=W_{\infty}(s)$, so we obtain
\bea
\sum_{s=0}^rW_{\infty}(s)\left[W_{m+n}(m+r-s)-W_m(m+r-s)+P_n^m(m+r-s)-W_n(m+r-s)
\right]=0,\nonumber
\eea
where $m<m+r-s\le m+n$. Denote $g=m+r-s$ and arrive at (\ref{g12}) that proves 
Theorem.
\end{proof}
\begin{corollary}\label{cor1}
\bea
\lim_{m\to\infty}P_n^m(s)=W_n(s),\qquad
\lim_{n\to\infty}P_n^m(s)=W_m(s).\nonumber
\eea
\end{corollary}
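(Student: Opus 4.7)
The plan is to derive the corollary as a direct consequence of Theorem~\ref{the2}, essentially by stabilization: for fixed $n$ and $s$, the value $P_n^m(s)$ will actually equal $W_n(s)$ for all sufficiently large $m$, so the limit is trivial once we check which case of Theorem~\ref{the2} applies.

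First I would fix $n$ and $s$ and take $m$ so large that $m\ge\max(n,s)$. Then $\mu_1=\min(n,m)=n$ and $\mu_2=\max(n,m)=m$, and $s$ lies in one of the first two ranges from Theorem~\ref{the2}: either $0\le s\le n=\mu_1$, in which case (\ref{g10}) gives $P_n^m(s)=W_{\mu_1}(s)=W_n(s)$, or $n<s\le m=\mu_2$, in which case (\ref{g11}) gives $P_n^m(s)=W_{\mu_1}(s)=W_n(s)$. Either way, the sequence $P_n^m(s)$ is \emph{eventually constant} and equal to $W_n(s)$, so
\[
\lim_{m\to\infty}P_n^m(s)=W_n(s).
\]

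For the second limit I would invoke the symmetry $P_n^m(s)=P_m^n(s)$ recorded in (\ref{g3}), which reduces $\lim_{n\to\infty}P_n^m(s)=W_m(s)$ to the first limit with the roles of $n$ and $m$ exchanged.

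There is no real obstacle here; the only minor thing to watch is that Theorem~\ref{the2} a priori only covers $0\le g\le n+m$, but for fixed $s$ this condition is automatically satisfied once $m$ is large, so no separate argument for $s>n+m$ is needed. The whole proof is essentially one observation about which regime of Theorem~\ref{the2} captures a fixed $s$ when the other parameter is sent to infinity.
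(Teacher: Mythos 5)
Your argument is correct and is exactly the intended derivation: the paper states Corollary~\ref{cor1} without proof as an immediate consequence of Theorem~\ref{the2}, and your stabilization observation (for $m\ge\max(n,s)$ one of (\ref{g10}) or (\ref{g11}) gives $P_n^m(s)=W_n(s)$, with the second limit following from the symmetry $P_n^m(s)=P_m^n(s)$ in (\ref{g3})) is precisely the reasoning the paper leaves implicit.
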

Further extension of Theorem \ref{the2} on higher $s\ge m+n$ loses its 
generality, that indicates a necessity to develop another approach. Equations 
(\ref{g9}) with different $g$ represent the linear convolution equations with a 
triangular Toeplitz matrix. They can be solved using the inversion of the 
Toeplitz matrix (see \cite{goh74}, Chapt. 3). We will give another 
representation of a general solution of (\ref{g9}) which can be found due to 
triangularity of the Toeplitz matrix.
\begin{theorem}\label{the3}
Let $P_n^m(s)$ be generated by the Gaussian polynomial $G(n,m;t)$, then 
\bea
P_n^m(g)=\sum_{r=0}^{g-1}\left[\sum_{s=0}^{g-r}W_n(s)W_m(g-r-s)-W_{m+n}(g-r)
\right]\Phi_r(m+n),\label{g17}
\eea
where coefficients $\Phi_r(m+n)$ are related to $W_{m+n}(s)$ and defined as 
follows,
\bea
\Phi_r(m+n)=\sum_{q=1}^r\sum_{q_1,\ldots,q_r}^q\frac{q!}{q_1!\cdot\ldots\cdot 
q_r!}\prod_{k=1}^r(-1)^{q_k}W^{q_k}_{m+n}(k),\quad\sum_{k=1}^rkq_k=r,\quad
\sum_{k=1}^rq_k=q.\quad\label{g18}
\eea
\end{theorem}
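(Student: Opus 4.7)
The cleanest route is to work directly with generating functions rather than with the matrix form of equation (\ref{g9}). From (\ref{g7}) I read off the identity
\[
G(n,m;t)\;=\;\frac{M(m;t)\,M(n;t)}{M(m+n;t)},
\]
so extracting $P_n^m(g)$ is equivalent to inverting the formal power series $M(m+n;t)$ and multiplying. My plan is to (i) identify the $\Phi_r(m+n)$ with the coefficients of $1/M(m+n;t)$, (ii) verify the explicit combinatorial expression (\ref{g18}) via a geometric/multinomial expansion, and (iii) read (\ref{g17}) off the resulting product after one small rearrangement.

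For step (i)-(ii), I write $M(m+n;t)=1+y(t)$ with $y(t)=\sum_{s\ge 1}W_{m+n}(s)t^s$, which is admissible because $W_{m+n}(0)=1$. The geometric series gives
\[
\frac{1}{M(m+n;t)}=\sum_{q\ge 0}(-1)^q\,y(t)^q.
\]
Now I expand $y(t)^q$ by the multinomial theorem: the contribution of a tuple $(q_1,\ldots,q_r,0,0,\ldots)$ with $\sum_k q_k=q$ and $\sum_k k q_k=r$ to the coefficient of $t^r$ is $\frac{q!}{q_1!\cdots q_r!}\prod_{k=1}^r W_{m+n}(k)^{q_k}$, and the sign $(-1)^q$ factors as $\prod_k(-1)^{q_k}$. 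Collecting everything gives exactly formula (\ref{g18}) and shows $\Phi_0(m+n)=1$ by the $q=0$ empty product.

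For step (iii), I split the numerator as
\[
M(m;t)M(n;t)\;=\;M(m+n;t)\;+\;\bigl[M(m;t)M(n;t)-M(m+n;t)\bigr],
\]
so that $G(n,m;t)=1+\bigl[M(m;t)M(n;t)-M(m+n;t)\bigr]\cdot\bigl(\sum_{r\ge 0}\Phi_r(m+n)t^r\bigr)$. Taking the coefficient of $t^g$ (for $g\ge 1$) on both sides and using $[t^{g-r}]M(m;t)M(n;t)=\sum_{s=0}^{g-r}W_n(s)W_m(g-r-s)$ yields
\[
P_n^m(g)\;=\;\sum_{r=0}^{g}\Phi_r(m+n)\Bigl[\sum_{s=0}^{g-r}W_n(s)W_m(g-r-s)-W_{m+n}(g-r)\Bigr].
\]
The $r=g$ term vanishes because the bracket collapses to $W_n(0)W_m(0)-W_{m+n}(0)=1-1=0$, truncating the outer sum to $r\le g-1$ and producing (\ref{g17}).

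I do not anticipate a serious obstacle: the only delicate part is bookkeeping in step (ii), where I must make sure the constraints $\sum_k q_k=q$ and $\sum_k k q_k=r$ line up with the indexing in (\ref{g18}), and the only conceptual subtlety is the convention $\Phi_0(m+n)=1$ (which (\ref{g18}) forces if one interprets the $q=0$ case as the empty product, and which is needed for internal consistency at $g=1$). Everything else is routine coefficient extraction justified by the fact that the convolution $P_n^m\!*\!W_{m+n}=W_n\!*\!W_m$ is a lower-triangular Toeplitz system whose inverse is computed exactly by the series $\Phi(t)=1/M(m+n;t)$.
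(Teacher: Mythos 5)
Your proof is correct, and it reaches (\ref{g17})--(\ref{g18}) by a route that differs in mechanics from the paper's. The paper stays at the level of the convolution system: it writes $P(g)=T(g)+\sum_{s=0}^{g-1}P(s)U(g-s)$ with $T(g)=\sum_{s=0}^{g}W_n(s)W_m(g-s)$ and $U(s)=-W_{m+n}(s)$, unfolds this recursion step by step, and asserts by induction the intermediate identity (\ref{g20}), from which $k=g$ gives $P(g)=\sum_{r=0}^{g-1}[T(g-r)+U(g-r)]\Phi_r(U)$; the partition-indexed form of $\Phi_r(U)$ emerges from bookkeeping the products of $U$'s generated by the unfolding. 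You instead work with the formal power series directly: you identify $\sum_{r\ge 0}\Phi_r(m+n)t^r$ as $1/M(m+n;t)$ via the geometric series in $y(t)=M(m+n;t)-1$ and obtain (\ref{g18}) in one stroke from the multinomial theorem, then extract the coefficient of $t^g$ from $G=1+[M(m)M(n)-M(m+n)]/M(m+n)$ and kill the $r=g$ term by the observation $W_n(0)W_m(0)-W_{m+n}(0)=0$. The two arguments compute the same inverse of the same lower-triangular Toeplitz operator, but yours buys two things: it replaces the paper's unproved inductive step (\ref{g20}) with a self-contained series manipulation, and it makes explicit the generating-function identity $\sum_r\Phi_r(m+n)t^r=1/M(m+n;t)$, which the paper never states but which explains at a glance why the $\Phi_r$ have the partition structure (\ref{g21}). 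What the paper's version buys in exchange is the intermediate formula (\ref{g20}) valid for every $1\le k\le g$, i.e.\ a family of partial inversions, which your coefficient extraction does not produce. Your handling of the conventions ($\Phi_0=1$ as the empty product, truncation of the outer sum to $r\le g-1$) matches what the paper needs for consistency with (\ref{g22}) and (\ref{g23}).
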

\begin{proof}
Consider linear convolution equations with a triangular Toeplitz matrix
\bea
P(g)=T(g)+\sum_{s=0}^{g-1}P(s)\;U(g-s),\qquad P(0)=T(0)=1,\label{g19}
\eea
where two known functions $T(g),\;U(g)$ and unknown function $P(g)$ are
considered only on the non-negative integers. The successive recursion of
(\ref{g19}) gives
\bea
P(g)&=&T(g)+T(g-1)\;U(1)+\sum_{s=0}^{g-2}P(s)\cdot[U(g-s)+U(1)\;U(g-1-s)]
\nonumber\\
&=&T(g)+T(g-1)\;U(1)+T(g-2)\;[U(2)+U^2(1)]+\nonumber\\
&&\sum_{s=0}^{g-3}P(s)\;[U(g-s)+U(1)\;U(g-1-s)+[U(2)+U^2(1)]\; U(g-2-s)].
\nonumber
\eea
By induction we can arrive at
\bea
P(g)=\sum_{r=0}^{k-1}T(g-r)\;\Phi_r(U)+\sum_{s=0}^{g-k}P(s)\;\sum_{r=0}^{k-1}
U(g-r-s)\;\Phi_r(U),\qquad 1\le k\le g,\label{g20}
\eea
where polynomials $\Phi_r(U)$ are related to the restricted partition number
$W_g(r)$ of positive integer $r$ into non-negative parts, none of which exceeds 
$g$, 
\bea
\Phi_r(U)=\sum_{q=1}^r\sum_{q_1,\ldots,q_r}^q\frac{q!}{q_1!\;\ldots\;q_r!}
\prod_{k=1}^r U^{q_k}(k),\qquad\sum_{k=1}^r kq_k=r,\quad\sum_{k=1}^rq_k=q,
\label{g21}
\eea
A sum $\sum_{q_1,\ldots,q_r}^q$ in (\ref{g21}) is taken over all distinct
solutions $\{q_1,\ldots,q_r\}$ of the two Diophantine equations (\ref{g21}) with
fixed $q$. Below we present expressions for the four first polynomials $\Phi_r
(U)$,
\bea
&&\Phi_0(U)=1,\quad\Phi_1(U)=U(1),\quad\Phi_2(U)=U(2)+U^2(1),\nonumber\\
&&\Phi_3(U)=U(3)+2U(2)U(1)+U^3(1),\label{g22}
\eea
and in (\ref{g27}) for the other two. The total number of algebraically 
independent terms, contributing to the polynomial $\Phi_r(U)$, is equal $W_r(r)
$, while the sum of coefficients at these terms is equal $2^{r-1}$, e.g., $W_3(
3)=3$, $1+2+1=2^2$. The terms comprising $\Phi_r(U)$ may be calculated with 
Mathematica Software using {\sf IntegerPartitions[r]}.

Put $k=g$ into (\ref{g20}) and obtain finally,
\bea
P(g)=\sum_{r=0}^{g-1}[T(g-r)+U(g-r)]\cdot\Phi_r(U).\label{g23}
\eea
Comparing (\ref{g19}) and (\ref{g8}) we conclude,
\bea
T(g)=\sum_{s=0}^gW_n(s)\;W_m(g-s),\qquad U(s)=-W_{m+n}(s).\label{g24}
\eea
Substituting (\ref{g24}) into (\ref{g23}) we immediately arrive at (\ref{g11},
\ref{g12}).
\end{proof}
Illustrate the usage of formulas (\ref{g17},\ref{g18}) and apply them to 
calculate the partition function $P_n^m(s)$ with small $m,n$, e.g., 
$m\!=2,n\!=3$,
\bea
P_3^2(g)&=&\sum_{r=0}^{g-1}\left[\sum_{s=0}^{g-r}W_3(s)W_2(g-r-s)-W_5(g-r)
\right]\Phi_r(5).\label{g25}
\eea
For this aim we need expressions for $W_2(s)$, $W_3(s)$ and $W_5(s)$, found 
in \cite{fr02}, section 6.1,
\bea
W_2(s)&=&\frac{s}{2}+\frac{3}{4}+\frac1{4}\cos\pi s,\nonumber\\
W_3(s)&=&\frac{s^2}{12}+\frac{s}{2}+\frac{47}{72}+
\frac{2}{9}\cos\frac{2\pi s}{3}+\frac1{8}\cos\pi s,\nonumber\\
W_5(s)&=&\frac{s^4}{2880}+\frac{s^3}{96}+\frac{31s^2}{288}+\frac{85s}{192}+
\frac{s}{64}\cos\pi s+\frac{50651}{86400}+\frac1{16}\left(\cos\frac{\pi s}{2}+
\sin\frac{\pi s}{2}\right)+\nonumber\\
&&\frac{2}{27}\cos\frac{2\pi s}{3}+\frac{2}{25}\cos\frac{4\pi s}{5}+
\frac{15}{128}\cos\pi s,\label{g26}
\eea
and polynomials $\Phi_r(U)$, $0\le r\le 5$, presented in (\ref{g22}) and 
also given below,
\bea
\Phi_4(U)&=&U(4)+2U(3)U(1)+U^2(2)+3U(2)U^2(1)+U^4(1),\label{g27}\\
\Phi_5(U)&=&U(5)+2U(4)U(1)+2U(3)U(2)+3U(3)U^2(1)+3U^2(2)U(1)+\nonumber\\
&&4U(2)U^3(1)+U^5(1).\nonumber
\eea
Making use of (\ref{g26}), calculate $W_2(r)$, $W_3(r)$, $W_5(r)$, $0\le r\le 
5$, and find 
$$
\Phi_0(5)=\Phi_5(5)=1,\quad\Phi_1(5)=\Phi_2(5)=-1,\quad\Phi_3(5)=\Phi_4(5)=0.
$$
Substitute the values for $W_k(r)$, $k=2,3,5$, and $\Phi_r(5)$, $0\le r\le 5$, 
into (\ref{g25}) and obtain
\bea
P_3^2(0)=P_3^2(1)=P_3^2(5)=P_3^2(6)=1,\qquad P_3^2(2)=P_3^2(3)=P_3^2(4)=2,
\label{g28}
\eea
that satisfies the straightforward calculation of the Gaussian polynomial
$G(3,2;t)$,
\bea
G(3,2;t)=1+t+2t^2+2t^3+2t^4+t^5+t^6.\nonumber
\eea
It is easy to verify that the values in (\ref{g28}) satisfy also Theorem 
\ref{the2}.
\section*{Acknowledgement}
The useful discussions with the late Prof. I. Gohberg are highly appreciated. 
The research was supported by the Kamea Fellowship.

\end{document}